\documentclass[12pt]{amsart}
\usepackage[utf8]{inputenc}
\usepackage[T1]{fontenc}
\usepackage{amsmath, amssymb, amsthm}
\usepackage{geometry}
\newtheorem{theorem}{Theorem}
\newtheorem{proposition}{Proposition}
\newtheorem{lemma}{Lemma}
\newtheorem{corollary}{Corollary}

\newtheorem{definition}{Definition}
\newtheorem{example}{Example}
\newcommand{\U}{\mathcal{U}}
\newcommand{\Lc}{\mathcal{L}}
\newcommand{\R}{\mathbb{R}}

\title[Convexity of the embedding parameter sets]{Convexity of the embedding parameter sets of some analytic function spaces}
\author {Beno\^it F. Sehba}
\address{Department of Mathematics, University of Ghana, PO. Box LG 62 Legon, Accra, Ghana }
\email{ bfsehba@ug.edu.gh}

\begin{document}

\maketitle

\begin{abstract}
In this note, we study the geometric structure of the parameter sets governing continuous embeddings between weighted Bergman-Orlicz spaces. First, for a fixed pair of growth functions, we show that the set of admissible weight exponents $(\alpha, \beta)$ is convex, provided the growth functions satisfy specific log-convexity and log-concavity conditions of the inverses. Second, we consider the dual problem where the weight exponents are fixed. We prove that the collection of growth function pairs that yield such an embedding is log-convex under a natural interpolation of their inverses. We then obtain interpolated embeddings between Bergman-Orlicz spaces. 
\end{abstract}

\noindent\textbf{Keywords:} Bergman-Orlicz spaces, convexity, log-convexity, log-concavity, growth functions, interpolation.

\noindent\textbf{2020 Mathematics Subject Classification:} Primary 30H20, 46E30 Secondary  26A51, 46B70 .

\section{Introduction}

In recent years, the study of embeddings between weighted Bergman spaces has expanded to the Orlicz setting, where the growth of analytic functions is measured by a Young function rather than a pure power. A central problem systematically investigated by Dje and Sehba in \cite{DieSehba2021,DieSehba2023,Sehba} is to find sharp relations between the growth functions $\Phi_1, \Phi_2$ and the weight exponents $\alpha, \beta$ such that the Carleson embedding
\[
A^{\Phi_1}_{\alpha} \hookrightarrow A^{\Phi_2}_{\beta}
\]
holds. Here $A^{\Phi}_{\alpha}$ denotes the Bergman-Orlicz space on the unit ball of $\mathbb{C}^n$ or the upper half-plane. Their work reveals that the embedding condition is equivalent (under appropriate conditions on the growth functions) to the following inequality 
\[
\Phi_1^{-1}\!\left( t^{2+\alpha} \right) \le C \, \Phi_2^{-1}\!\left( t^{2+\beta} \right), \qquad \forall t\ge 1,
\]
for some constant $C>0$ (for classical Bergman spaces, see \cite{HedenmalmKorenblumZhu,Luecking,Oleinik,ZhuBook,ZhuBergman} and the references therein). This inequality, which compares the inverted growth functions at different exponents, is the key to understanding the geometry of admissible parameters.

The present paper takes this condition as its starting point and studies two natural convexity properties that arise from it. 

More precisely, we show that for fixed growth functions $\Phi_1$ and $ \Phi_2$ whose inverses are log convex and log concave respectively, the set
\[
\mathcal{E} = \left\{ (\alpha,\beta) \in \R^2 \;\middle|\;
\alpha,\beta \ge -1,\quad
\exists\, C,K > 0 \text{ s.t. }
\Phi_1^{-1}\!\left( t^{2+\alpha} \right) \le C\,
\Phi_2^{-1}\!\left( K t^{2+\beta} \right) \;\forall\, t>0
\right\}
\]
is convex. We then also obtain that $\mathcal{E}$ is an epigraph of a related function.

For the dual problem, we show in two different ways that 
for fixed $(\alpha,\beta)$, the set
\[
\mathcal{F} = \left\{ (\Phi_1,\Phi_2) \in \mathcal{G}^2 \;\middle|\;
\exists\, C > 0 \text{ s.t. }
\Phi_1^{-1}\!\left( t^{2+\alpha} \right) \le C\,
\Phi_2^{-1}\!\left( t^{2+\beta} \right) \;\forall\, t>0
\right\}
\]
is log-convex under log-convex interpolation of the growth functions. Here $\mathcal{G}$ is the set of growth functions on $[0,\infty)$. This last result along with some invariance of the monotonicity of ratios of growth functions allows us to obtain interpolation of embeddings between Bergman-Orlicz spaces without going through the machinery of complex interpolation theory (see for example \cite{ZhuZhao}). This on its own, is a strong motivation for this paper.

\medskip
These convexity properties have natural operational and conceptual consequences. The convexity of $\mathcal{E}$ implies that the admissible exponent region is an epigraph $\{(\alpha,\beta): \beta \ge \beta^*(\alpha)\}$ with $\beta^*$ convex and nondecreasing. This yields a phase diagram in the usual way; points above the curve give embeddings while points below do not. The convex boundary guarantees that the optimal (smallest) $\beta$ for a given $\alpha$ is a convex function, so any local minimum is global, enabling convex optimization for sharp embedding exponents. Moreover, if embeddings hold for two exponent pairs, they hold for all convex combinations, providing stability under perturbations and a powerful extrapolation tool. For more on convex analysis, we refer the interested reader  to \cite{Rockafellar1970}.

For $\mathcal{F}$, log-convexity means the admissible set is closed under geometric averaging. The optimal embedding constant $C_{\min}(\Phi,\Psi)$ satisfies
\[
C_{\min}(\Phi_\theta,\Psi_\theta) \le C_{\min}(\Phi_0,\Psi_0)^{1-\theta} C_{\min}(\Phi_1,\Psi_1)^{\theta},
\]
where $\Phi_\theta$ and $\Psi_\theta$ are interpolates of $(\Phi_0,\Phi_1)$ and $(\Psi_0,\Psi_1)$ respectively. Thus, $\log C_{\min}$ is convex along interpolations. This gives explicit control over the embedding constant for interpolated spaces.  Together, the two results reveal a duality between exponent space and function space, suggesting a deeper variational principle underlying the embedding criteria for Bergman-Orlicz spaces.

\medskip
The paper is organized as follows. Section 2 recalls background on growth functions, log-convexity, log-concavity, and interpolation. In Section 3, we present and prove our main results. We end the paper with Section 4 as a conclusion to the paper.

\section{Preliminaries}
\subsection{Two usual classes of growth functions and Bergman-Orlicz spaces}

Recall that a function $\Phi: [0,\infty) \to [0,\infty)$ is said to be a growth function if it is increasing, $\Phi(0)=0$, $\Phi(t)\to\infty$ as $t\to\infty$. We denote by $\mathcal{G}$ the set of all growth functions.

\begin{definition}\label{def:U}
$\Phi\in\mathcal{G}$ has upper type $q$ if there exists $C>0$ such that
\[
\Phi(st) \le C s^q \Phi(t), \qquad \forall s\ge 1,\ \forall t>0.
\]
Let $\U^q$ denote the set of growth functions of upper type $q$ such that $t\mapsto \Phi(t)/t$ is nondecreasing. Then
\[
\U := \bigcup_{q\ge 1} \U^q.
\]
\end{definition}

\begin{definition}\label{def:L}
$\Phi\in\mathcal{G}$ has lower type $p$ if there exists $C>0$ such that
\[
\Phi(st) \le C s^p \Phi(t), \qquad \forall 0<s<1,\ \forall t>0.
\]
Let ${\Lc}^p$ denote the set of growth functions of lower type $p$ such that $t\mapsto \Phi(t)/t$ is nonincreasing. Then
\[
\Lc := \bigcup_{p\le 1} {\Lc}^p.
\]
\end{definition}

Let us denote by $d\nu$ the Lebesgue measure on the unit ball $\mathbb B^n$ of $\mathbb C^n$. For $\alpha>-1$, we denote by $d\nu_{\alpha}$ the normalized Lebesgue measure $d\nu_{\alpha}(z)=c_{\alpha}(1-|z|^2)^{\alpha}d\nu(z)$, $c_\alpha$ being the normalization constant. In the upper half-plane of $\mathbb{C}$, we use the notation $dV_\alpha(x+iy)=y^\alpha dxdy$. 

Let $\Omega$ will be either the unit ball of $\mathbb{C}^n$ ($n\in\mathbb{N}$) or the upper half-plane of $\mathbb{C}$; we use the notation $d\Omega_\alpha$ for either $d\nu_{\alpha}(z)=c_{\alpha}(1-|z|^2)^{\alpha}d\nu(z)$ or $dV_\alpha(x+iy)$ . For $\Phi$ a growth function, the weighted Bergman-Orlicz space $A_\alpha^{\Phi}(\Omega)$ is the space of all holomorphic functions $f$ such that
$$\Vert f\Vert_{\Phi,\alpha}=||f||_{A_\alpha^{\Phi}}:=\int_{\Omega}\Phi(|f(z)|)d\Omega_{\alpha}(z)<\infty.$$
We define on $A_\alpha^{\Phi}(\Omega)$ the following (quasi)-norm
\begin{equation}\label{BergOrdef1}
\Vert f\Vert^{lux}_{\Phi,\alpha}=||f||^{lux}_{A_\alpha^{\Phi}}:=\inf\{\lambda>0: \int_{\Omega}\Phi\left(\frac{|f(z)|}{\lambda}\right)d\Omega_\alpha(z)\le 1\}.
\end{equation}
And associated space to the above, is the Hardy-Orlicz space $H^\Phi(\Omega)$ that we understand as the limit of $A_\alpha^\Phi(\Omega)$ when $\alpha\to -1^+$. For the specific definition, we refer to \cite{DieSehba2023}. 
\subsection{Log-convexity and log-concavity up to a constant}

\begin{definition}\label{def:logconv}
A growth function $\Phi$ is said to be log-convex up to a constant $C\ge 1$ (written $\Phi\in\Delta_{\log}^-$) if
\[
\Phi(u^\theta v^{1-\theta}) \le C^{\theta(1-\theta)} \Phi(u)^\theta \Phi(v)^{1-\theta}, \qquad \forall u,v>0,\ \theta\in[0,1].
\]
It is log-concave up to a constant $C\ge 1$ (written $\Phi\in\Delta_{\log}^+$) if
\[
\Phi(C^{\theta(1-\theta)} u^\theta v^{1-\theta}) \ge \Phi(u)^\theta \Phi(v)^{1-\theta}, \qquad \forall u,v>0,\ \theta\in[0,1].
\]
\end{definition}
We observe the following.
\begin{proposition}\label{prop:dualityconv}
If $\Phi$ is strictly increasing, then $\Phi \in \Delta_{\log}^-$ if and only if $\Phi^{-1} \in \Delta_{\log}^+$.
\end{proposition}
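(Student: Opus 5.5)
The plan is a direct change of variables, using that $\Phi$ is a strictly increasing bijection of $[0,\infty)$ onto itself. It is increasing, $\Phi(0)=0$, and $\Phi(t)\to\infty$. Continuity is implicitly needed for surjectivity, and we assume it, as the paper does when writing $\Phi^{-1}$. So $\Phi^{-1}$ is also strictly increasing, and both $\Phi$ and $\Phi^{-1}$ preserve and reflect inequalities. For $a,b>0$ we have $\Phi(a)\le b \iff a\le \Phi^{-1}(b)$. The whole proof consists of rewriting each defining inequality in Definition \ref{def:logconv} by applying $\Phi^{-1}$ or $\Phi$ to both sides and reparametrizing.

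For ($\Rightarrow$), assume $\Phi\in\Delta_{\log}^-$ with constant $C$. Fix $x,y>0$ and $\theta\in[0,1]$, and put $u=\Phi^{-1}(x)$, $v=\Phi^{-1}(y)$, which are positive. The hypothesis gives
\[
\Phi(u^\theta v^{1-\theta})\le C^{\theta(1-\theta)}x^\theta y^{1-\theta}.
\]
Applying the increasing map $\Phi^{-1}$ yields
\[
\Phi^{-1}(x)^\theta\Phi^{-1}(y)^{1-\theta}\le \Phi^{-1}\big(C^{\theta(1-\theta)}x^\theta y^{1-\theta}\big).
\]
This is exactly the statement that $\Phi^{-1}\in\Delta_{\log}^+$ with the same constant $C$.

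For ($\Leftarrow$), assume $\Phi^{-1}\in\Delta_{\log}^+$ with constant $C$. Given $u,v>0$, set $x=\Phi(u)$ and $y=\Phi(v)$. These are positive because $\Phi$ is strictly increasing with $\Phi(0)=0$. The hypothesis reads
\[
u^\theta v^{1-\theta}\le \Phi^{-1}\big(C^{\theta(1-\theta)}\Phi(u)^\theta\Phi(v)^{1-\theta}\big).
\]
Applying $\Phi$ and using $\Phi\circ\Phi^{-1}=\mathrm{id}$ recovers the log-convexity inequality for $\Phi$.

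The only delicate point is justifying that $\Phi^{-1}$ is defined on all of $(0,\infty)$ and satisfies $\Phi\circ\Phi^{-1}=\mathrm{id}$. This needs surjectivity, hence continuity. Otherwise one would have to work with a generalized inverse, where only the one-sided relations $\Phi(\Phi^{-1}(s))\le s$ and the like survive, and the constant could degrade. I would state the continuity assumption explicitly and otherwise treat both directions as the symmetric substitutions above.
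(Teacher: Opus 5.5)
Your proof is correct and is the same change-of-variables argument as the paper's: apply $\Phi^{-1}$ (resp.\ $\Phi$) to the defining inequality and substitute $x=\Phi(u)$, $y=\Phi(v)$, keeping the same constant $C$. Beyond the paper, you write out the converse that it dismisses as ``similar'', and you correctly flag the continuity (surjectivity) assumption needed for $\Phi^{-1}$ to be defined on all of $(0,\infty)$ with $\Phi\circ\Phi^{-1}=\mathrm{id}$, which the paper's substitution uses without stating it.
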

\begin{proof}
Let $\Phi\in\Delta_{\log}^-$. For any $u,v>0$, $\theta\in[0,1]$,
\[
\Phi(u^\theta v^{1-\theta}) \le C^{\theta(1-\theta)} \Phi(u)^\theta \Phi(v)^{1-\theta}.
\]
Apply $\Phi^{-1}$ to both sides:
\[
u^\theta v^{1-\theta} \le \Phi^{-1}\left( C^{\theta(1-\theta)} \Phi(u)^\theta \Phi(v)^{1-\theta} \right).
\]
Set $s=\Phi(u)$, $t=\Phi(v)$. Then $u=\Phi^{-1}(s)$, $v=\Phi^{-1}(t)$, so
\[
\Phi^{-1}(s)^\theta \Phi^{-1}(t)^{1-\theta} \le \Phi^{-1}\left( C^{\theta(1-\theta)} s^\theta t^{1-\theta} \right),
\]
which is exactly $\Phi^{-1}\in\Delta_{\log}^+$.
The converse is similar.
\end{proof}

Let us give some concrete examples of functions in $\Delta_{\log}^+$ and $\Delta_{\log}^-$.

\begin{example}
The power functions $\Phi(t) = t^p$ with $p>0$ belong to $\Delta_{\log}^+\cap \Delta_{\log}^-$.
\end{example}

\begin{example}
Let $\Phi(t) = e^t - 1$. Then $\Phi^{-1}(t) = \log(1+t)$. Thus, $\log\Phi^{-1}(t) = \log\log(1+t)$. Its second derivative is negative for $t>0$, so $\Phi^{-1}$ is log-concave. Hence,  $\Phi\in\Delta_{\log}^-$.
\end{example}

\begin{example}
Let $\Phi(t) = t^p \log^\alpha(1+t)$ with $p>1$, $\alpha>0$. It is easy to check that $\Phi$ is log concave, i.e. $\Phi\in \Delta_{\log}^+$.
\end{example}

\begin{example}
Let $\Phi(t) = \exp(\exp(t))-e$. Then $\Phi^{-1}(t) = \log\log(t+e)$. Then $\log\Phi^{-1}(t) = \log\log\log(t+e)$, which is concave (log of log of log). So $\Phi\in\Delta_{\log}^-$. 
\end{example}
\subsection{Interpolation of growth functions}
Let us start this subsection with the following observation.
\begin{proposition}\label{prop:interp}
Let $\Phi_0\in\U^{q_0}$ and $\Phi_1\in\U^{q_1}$ with $q_0\le q_1$. For
$\theta\in[0,1]$, define $\Phi_\theta$ by
\[
\log\Phi_\theta(t)=(1-\theta)\log\Phi_0(t)+\theta\log\Phi_1(t).
\]
Then $\Phi_\theta\in\U^{q_\theta}$ where $q_\theta=(1-\theta)q_0+\theta q_1$.
\end{proposition}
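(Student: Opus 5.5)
Writing $\Phi_\theta(t)=\Phi_0(t)^{1-\theta}\Phi_1(t)^{\theta}$, I will verify directly the three requirements defining membership in $\U^{q_\theta}$: that $\Phi_\theta$ is a growth function, that it has upper type $q_\theta$, and that $t\mapsto\Phi_\theta(t)/t$ is nondecreasing. Each follows from the multiplicative structure of the geometric mean, so the argument is essentially bookkeeping.

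\emph{Growth function.} Since $\Phi_0,\Phi_1\ge 0$ are increasing, so is the product of their nonnegative powers. We have $\Phi_\theta(0)=0$ for $\theta\in(0,1)$ because both factors vanish at $0$; the endpoint cases $\theta=0,1$ are trivial. Finally, $\Phi_\theta(t)\to\infty$ as $t\to\infty$ because both factors tend to infinity. Strictly, the logarithmic definition only makes sense where $\Phi_i(t)>0$, so at $t=0$ I will read the definition as the product formula.

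\emph{Upper type.} Suppose $\Phi_i(st)\le C_i s^{q_i}\Phi_i(t)$ for $s\ge1$ and $t>0$. Raising the $i=0$ inequality to the power $1-\theta$, the $i=1$ inequality to the power $\theta$, and multiplying gives
\[
\Phi_\theta(st)\le C_0^{1-\theta}C_1^{\theta}\, s^{(1-\theta)q_0+\theta q_1}\,\Phi_\theta(t),
\]
which is upper type $q_\theta$ with constant $C_0^{1-\theta}C_1^\theta$.

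\emph{Monotonicity of the ratio.} I will write
\[
\frac{\Phi_\theta(t)}{t}=\Big(\frac{\Phi_0(t)}{t}\Big)^{1-\theta}\Big(\frac{\Phi_1(t)}{t}\Big)^{\theta}.
\]
This is a product of nonnegative nondecreasing functions raised to nonnegative powers, so it is nondecreasing. Also $q_\theta\ge1$, since $q_0,q_1\ge1$.

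\emph{Main obstacle.} There is essentially none; the only delicate point is the degenerate value $t=0$, where $\log\Phi_i(0)=-\infty$ and the logarithmic definition has to be read as the product formula. The hypothesis $q_0\le q_1$ plays no role in the argument and serves only to order the interpolated exponents.
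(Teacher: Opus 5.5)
Your proof is correct and follows the paper's argument: the upper-type estimate is obtained exactly as in the paper, by raising the two endpoint inequalities to the powers $1-\theta$ and $\theta$ and multiplying. Writing $\Phi_\theta(t)/t$ as a product of nonnegative powers of nondecreasing functions is the multiplicative form of the paper's remark that $\log(\Phi_\theta(t)/t)$ is a convex combination of nondecreasing functions, and it avoids the differentiability that the paper's derivative computation tacitly assumes; your extra check that $\Phi_\theta$ is a growth function is a harmless step the paper omits.
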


\begin{proof}
We need to verify both defining properties of $\U^{q_\theta}$.

For any $s>0$ and $t\ge 1$,
\begin{align*}
\Phi_\theta(st)
&= \Phi_0(st)^{1-\theta}\Phi_1(st)^\theta \\
&\le (C_0 t^{q_0}\Phi_0(s))^{1-\theta}(C_1 t^{q_1}\Phi_1(s))^\theta \\
&= C_0^{1-\theta}C_1^\theta \, t^{(1-\theta)q_0+\theta q_1} \, \Phi_0(s)^{1-\theta}\Phi_1(s)^\theta \\
&= C\, t^{q_\theta}\,\Phi_\theta(s),
\end{align*}
where $C=C_0^{1-\theta}C_1^\theta$. Thus $\Phi_\theta$ is of upper-type $q_\theta$.

Since $\log\Phi_0(t)/t$ and
$\log\Phi_1(t)/t$ are nondecreasing (as $\Phi_0,\Phi_1\in\U$), their convex
combination is also nondecreasing. More explicitly,
\[
\frac{d}{dt}\log\frac{\Phi_\theta(t)}{t}
= (1-\theta)\frac{d}{dt}\log\frac{\Phi_0(t)}{t}
+ \theta\frac{d}{dt}\log\frac{\Phi_1(t)}{t} \ge 0
\]
because each derivative is nonnegative.
Thus $\Phi_\theta(t)/t$ is nondecreasing.

Therefore $\Phi_\theta\in\U^{q_\theta}$.
\end{proof}
Let us also record the following.
\begin{proposition}\label{prop:interp-lower}
Let $\Phi_0\in\Lc^{p_0}$ and $\Phi_1\in\Lc^{p_1}$ with $p_0,p_1\le 1$. For
$\theta\in[0,1]$, define $\Phi_\theta$ by
\[
\log\Phi_\theta(t)=(1-\theta)\log\Phi_0(t)+\theta\log\Phi_1(t).
\]
Then $\Phi_\theta\in\Lc^{p_\theta}$ where $p_\theta=(1-\theta)p_0+\theta p_1$.
\end{proposition}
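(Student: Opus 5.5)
The plan is to mirror the proof of Proposition \ref{prop:interp}, checking the two defining properties of $\Lc^{p_\theta}$ one at a time. First, $\Phi_\theta=\Phi_0^{1-\theta}\Phi_1^{\theta}$ is itself a growth function. It is increasing, since it is a product of nonnegative powers of increasing nonnegative functions. It vanishes at $0$. It tends to $\infty$ because both factors do. (If $\theta\in\{0,1\}$ the statement is trivial, so assume $0<\theta<1$.)

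\emph{Lower type.} Let $C_0,C_1$ be the constants in Definition \ref{def:L} for $\Phi_0,\Phi_1$. For $0<s<1$ and $t>0$, raise the inequalities $\Phi_0(st)\le C_0 s^{p_0}\Phi_0(t)$ and $\Phi_1(st)\le C_1 s^{p_1}\Phi_1(t)$ to the powers $1-\theta$ and $\theta$, respectively, and multiply. Both sides are nonnegative, so this gives
\[
\Phi_\theta(st)\le C_0^{1-\theta}C_1^{\theta}\, s^{(1-\theta)p_0+\theta p_1}\,\Phi_\theta(t)=C\,s^{p_\theta}\Phi_\theta(t).
\]
This is exactly the statement that $\Phi_\theta$ has lower type $p_\theta$. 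Also $p_\theta\le 1$, since it is a convex combination of $p_0,p_1\le 1$.

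\emph{Monotonicity of $\Phi_\theta(t)/t$.} I would avoid derivatives, since growth functions need not be differentiable. Instead use the algebraic identity
\[
\frac{\Phi_\theta(t)}{t}=\Big(\frac{\Phi_0(t)}{t}\Big)^{1-\theta}\Big(\frac{\Phi_1(t)}{t}\Big)^{\theta},
\]
which holds because the exponents $1-\theta$ and $\theta$ sum to $1$. Each factor is a nonnegative nonincreasing function. Raising to a nonnegative power preserves this, and so does taking a product of nonnegative nonincreasing functions. Hence $\Phi_\theta(t)/t$ is nonincreasing, and therefore $\Phi_\theta\in\Lc^{p_\theta}$.

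There is no genuine obstacle here. The only points needing care are the nonnegativity used when multiplying inequalities, which is guaranteed since growth functions take values in $[0,\infty)$, and the exponent-sum identity above.
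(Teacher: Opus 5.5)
Your proof is correct and follows the paper's structure. The lower-type estimate is the same computation, and for the monotonicity of $\Phi_\theta(t)/t$ you use the same factorization $\log(\Phi_\theta(t)/t)=(1-\theta)\log(\Phi_0(t)/t)+\theta\log(\Phi_1(t)/t)$ that the paper states. The paper, however, differentiates this identity, tacitly assuming $\Phi_0,\Phi_1$ are differentiable, whereas your direct argument with products of nonnegative nonincreasing functions needs no such assumption; you also check that $\Phi_\theta$ is a growth function, which the paper omits.
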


\begin{proof}
Let show that both defining properties of $\Lc^{p_\theta}$ are satisfied.

Since $\Phi_0\in\Lc^{p_0}$ and $\Phi_1\in\Lc^{p_1}$, there exist constants $C_0,C_1>0$ such that for all $0<s<1$ and $t>0$,
\[
\Phi_0(st)\le C_0 s^{p_0}\Phi_0(t),\qquad
\Phi_1(st)\le C_1 s^{p_1}\Phi_1(t).
\]
Then for any $0<s<1$ and $t>0$,
\begin{align*}
\Phi_\theta(st)
&= \Phi_0(st)^{1-\theta}\Phi_1(st)^{\theta} \\
&\le \bigl(C_0 s^{p_0}\Phi_0(t)\bigr)^{1-\theta}
   \bigl(C_1 s^{p_1}\Phi_1(t)\bigr)^{\theta} \\
&= C_0^{1-\theta}C_1^{\theta}\,
   s^{(1-\theta)p_0+\theta p_1}\,
   \Phi_0(t)^{1-\theta}\Phi_1(t)^{\theta} \\
&= C\, s^{p_\theta}\,\Phi_\theta(t),
\end{align*}
where $C=C_0^{1-\theta}C_1^{\theta}$ and $p_\theta=(1-\theta)p_0+\theta p_1$.
Thus $\Phi_\theta$ has lower type $p_\theta$.

For the second property, we note that
since $\Phi_0,\Phi_1\in\Lc$, the functions $t\mapsto\Phi_0(t)/t$ and $t\mapsto\Phi_1(t)/t$ are nonincreasing. Equivalently,
\[
\frac{d}{dt}\log\frac{\Phi_0(t)}{t}\le 0,\qquad
\frac{d}{dt}\log\frac{\Phi_1(t)}{t}\le 0.
\]
As
\[
\log\frac{\Phi_\theta(t)}{t}
= (1-\theta)\log\frac{\Phi_0(t)}{t}
+ \theta\log\frac{\Phi_1(t)}{t},
\]
we easily obtain
\[
\frac{d}{dt}\log\frac{\Phi_\theta(t)}{t}
= (1-\theta)\frac{d}{dt}\log\frac{\Phi_0(t)}{t}
+ \theta\frac{d}{dt}\log\frac{\Phi_1(t)}{t}
\le 0.
\]
Thus $\Phi_\theta(t)/t$ is nonincreasing.

Therefore $\Phi_\theta\in\Lc^{p_\theta}$.
\end{proof}
Let us recall the following definition.
\begin{definition}
For $\Phi\in\mathcal{C}^1(\R_+)$ a growth function, define
\[
a_\Phi:=\inf_{t>0}\frac{t\Phi'(t)}{\Phi(t)},\qquad
b_\Phi:=\sup_{t>0}\frac{t\Phi'(t)}{\Phi(t)}.
\]
These are called the Matuszewska--Orlicz indices; $a_\Phi$ is the lower indice, while $b_\Phi$ is the upper indice.
\end{definition}

We need the following to prove the next result.
\begin{lemma}\label{lem:inversemonocity}
Let $\Phi\in\Lc\cup\U$ and $\Psi\in\Lc\cup\U$ be such $a_\Psi\ge b_\Phi$. Then both that ${\Psi}/{\Phi}$ and ${\Phi^{-1}}/{\Psi^{-1}}$ are nondecreasing.
\end{lemma}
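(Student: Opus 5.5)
The plan is to work with logarithmic derivatives. Since the indices $a_\Phi,b_\Phi$ are defined, I will assume $\Phi,\Psi\in\mathcal{C}^1$ with positive values on $(0,\infty)$, which is implicit in the statement.

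\textbf{Step 1: the ratio $\Psi/\Phi$.} For $t>0$,
\[
t\frac{d}{dt}\log\frac{\Psi(t)}{\Phi(t)}=\frac{t\Psi'(t)}{\Psi(t)}-\frac{t\Phi'(t)}{\Phi(t)}\ge a_\Psi-b_\Phi\ge 0 .
\]
Hence $\log(\Psi/\Phi)$ is nondecreasing, and so is $\Psi/\Phi$. The hypothesis $\Phi,\Psi\in\Lc\cup\U$ is used only to guarantee that these are genuine growth functions, so that the inverses exist and the indices are finite.

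\textbf{Step 2: the ratio of inverses.} Here I want strict monotonicity so that $\Phi^{-1},\Psi^{-1}$ are honest inverses. The membership in $\Lc\cup\U$ together with $\Phi$ increasing gives this; alternatively, in the nondegenerate case $b_\Phi>0$ we have $a_\Psi\ge b_\Phi>0$, so $\Psi'>0$.

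Write $s=\Phi^{-1}(x)$ and $u=\Psi^{-1}(x)$. By the inverse function rule,
\[
x\frac{d}{dx}\log\Phi^{-1}(x)=\frac{x}{s\,\Phi'(s)}=\frac{\Phi(s)}{s\Phi'(s)}\ge \frac{1}{b_\Phi},
\]
and similarly
\[
x\frac{d}{dx}\log\Psi^{-1}(x)=\frac{\Psi(u)}{u\Psi'(u)}\le\frac{1}{a_\Psi}.
\]
Therefore
\[
x\frac{d}{dx}\log\frac{\Phi^{-1}(x)}{\Psi^{-1}(x)}\ge \frac1{b_\Phi}-\frac1{a_\Psi}\ge 0,
\]
because $a_\Psi\ge b_\Phi>0$. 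So $\Phi^{-1}/\Psi^{-1}$ is nondecreasing.

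\textbf{Alternative route for Step 2.} One can also avoid differentiating the inverses and argue directly from the indices, giving a derivative-free version. Integrating $t\Phi'/\Phi\le b_\Phi$ gives
\[
\Phi(\lambda s)\le\lambda^{b_\Phi}\Phi(s)\quad(\lambda\ge1),
\qquad
\Psi(\lambda u)\ge\lambda^{a_\Psi}\Psi(u)\quad(\lambda\ge 1).
\]
From these, for $y\ge x$ one gets
\[
\frac{\Phi^{-1}(y)}{\Phi^{-1}(x)}\ge (y/x)^{1/b_\Phi}\ge (y/x)^{1/a_\Psi}\ge\frac{\Psi^{-1}(y)}{\Psi^{-1}(x)},
\]
which is exactly the claimed monotonicity.

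\textbf{Main obstacle.} The main point needing care is the degenerate case. If $b_\Phi=0$, then $\Phi$ is constant, which contradicts $\Phi$ being a growth function. So $b_\Phi>0$, the reciprocals are finite, and the proof goes through. I also need to ensure the indices are finite and attained as bounds, which is guaranteed by the $\mathcal{C}^1$ assumption.
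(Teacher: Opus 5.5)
Your proof is correct, and your Steps 1--2 follow the paper's argument. Step 1 is the same computation of $(\Psi/\Phi)'$. In Step 2 you use the inverse function rule to derive the relations $a_{\Phi^{-1}}=1/b_\Phi$ and $b_{\Psi^{-1}}=1/a_\Psi$, which the paper quotes from the literature and then applies ``the same way''.

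Your alternative route is genuinely different and a bit more robust. You integrate the index bounds into $\Phi(\lambda s)\le\lambda^{b_\Phi}\Phi(s)$ and $\Psi(\lambda u)\ge\lambda^{a_\Psi}\Psi(u)$ for $\lambda\ge1$. You then compare $\Phi^{-1}(y)/\Phi^{-1}(x)$ with $\Psi^{-1}(y)/\Psi^{-1}(x)$ directly, without ever differentiating an inverse.

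The derivative route, yours and the paper's, tacitly needs $\Phi'>0$. This is automatic for $\Phi\in\U$, where $t\Phi'(t)\ge\Phi(t)$. It is not automatic for $\Phi\in\Lc$: there $\Phi'$ can vanish at isolated points, and $\Phi^{-1}$ is then not differentiable at the image points. Your observation that $a_\Psi>0\Rightarrow\Psi'>0$ covers only $\Psi$. The conclusion still holds, since the log-ratio is continuous with derivative in $[0,+\infty]$ everywhere. Still, the integrated version avoids the issue entirely.

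One small correction: finiteness of $b_\Phi$ does not follow from the $\mathcal{C}^1$ assumption; for instance $\Phi(t)=e^t-1$ has $b_\Phi=\infty$. It follows from the hypothesis $b_\Phi\le a_\Psi<\infty$, and no attainment of the supremum or infimum is needed.
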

\begin{proof}
We have $\left(\frac{\Psi(t)}{\Phi(t)}\right)'=\frac{\Psi'(t)\Phi(t)-\Psi(t)\Phi'(t)}{\Phi^2(t)}$. From the definition of lower and upper indices, we obtain for $t>0$,
$$\Psi'(t)\Phi(t)-\Psi(t)\Phi'(t)=\frac{\Psi(t)\Phi(t)}{t}\left(\frac{t\Psi'(t)}{\Psi(t)}-\frac{t\Phi'(t)}{\Phi(t)}\right)\ge \frac{\Psi(t)\Phi(t)}{t}(a_{\Psi}-b_{\Phi})\ge 0.$$
That is ${\Psi}/{\Phi}$ is nondecreasing. The proof for ${\Phi^{-1}}/{\Psi^{-1}}$ follows the same way, using the relations $a_{\Phi^{-1}}=\frac{1}{b_\Phi}$ and $b_{\Phi^{-1}}=\frac{1}{a_\Phi}$ (see \cite{DieSehba2023}).  
\end{proof}
The following result ensures that the natural order of growth functions (expressed via the monotonicity of the ratio $\Psi/\Phi$) is preserved under the log-convex interpolation of inverses. 

\begin{proposition}\label{prop:monotonicity-preservation}
Let $\Phi_0,\Phi_1,\Psi_0,\Psi_1$ be growth functions such that $a_{\Psi_i}\ge b_{\Phi_i}$, $i=0,1$.
Define the log-convex interpolations
\[
\Phi_\theta^{-1}(t) = \Phi_0^{-1}(t)^{1-\theta} \Phi_1^{-1}(t)^{\theta}, \quad
\Psi_\theta^{-1}(t) = \Psi_0^{-1}(t)^{1-\theta} \Psi_1^{-1}(t)^{\theta}.
\]
Then the ratios $R_\theta(t) = \Psi_\theta(t)/\Phi_\theta(t)$ and $\tilde{R}_\theta(t) = \Psi_\theta^{-1}(t)/\Phi_\theta^{-1}(t)$ are  nondecreasing on $\R_+$ for every $\theta \in [0,1]$.
\end{proposition}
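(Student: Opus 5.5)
Once we know the interpolated functions have ordered indices, both monotonicity claims follow from Lemma \ref{lem:inversemonocity}. So the plan is to get index control on the interpolated inverses. We work throughout with the elasticities of the inverses. Since $\Phi_i,\Psi_i$ are $\mathcal{C}^1$ growth functions with $a_{\Psi_i}\ge b_{\Phi_i}>0$, they are strictly increasing. For a strictly increasing $\mathcal{C}^1$ function $F$ set
\[
\varepsilon_F(t)=\frac{tF'(t)}{F(t)}.
\]
By the chain rule, $\varepsilon_{F^{-1}}(t)=1/\varepsilon_F(F^{-1}(t))$. This is the pointwise form of the relations $a_{\Phi^{-1}}=1/b_\Phi$ and $b_{\Phi^{-1}}=1/a_\Phi$ used in Lemma \ref{lem:inversemonocity}. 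The hypothesis $a_{\Psi_i}\ge b_{\Phi_i}$ then becomes
\[
a_{\Phi_i^{-1}}=\frac{1}{b_{\Phi_i}}\ge \frac{1}{a_{\Psi_i}}=b_{\Psi_i^{-1}},\qquad i=0,1.
\]

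\textbf{The inverse ratio.} Taking logarithms,
\[
\log\tilde R_\theta(t)=(1-\theta)\log\frac{\Psi_0^{-1}(t)}{\Phi_0^{-1}(t)}+\theta\log\frac{\Psi_1^{-1}(t)}{\Phi_1^{-1}(t)}.
\]
So $\log \tilde R_\theta$ is a convex combination of $\log \tilde R_0$ and $\log \tilde R_1$. The monotonicity of $\tilde R_\theta$ therefore follows directly from that of $\tilde R_0$ and $\tilde R_1$. These are monotone by the argument of Lemma \ref{lem:inversemonocity}, since the derivative of $\log(\Phi_i^{-1}/\Psi_i^{-1})$ is $t^{-1}\bigl(\varepsilon_{\Phi_i^{-1}}-\varepsilon_{\Psi_i^{-1}}\bigr)\ge t^{-1}\bigl(a_{\Phi_i^{-1}}-b_{\Psi_i^{-1}}\bigr)\ge0$. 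This shows $\Phi_i^{-1}/\Psi_i^{-1}$ is nondecreasing. Note that it gives the ratio in the orientation $\Phi_\theta^{-1}/\Psi_\theta^{-1}$, matching Lemma \ref{lem:inversemonocity}. I would read the statement's $\tilde R_\theta$ with that orientation, or else note that the same computation gives the reverse monotonicity of the ratio exactly as written. This step is routine.

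\textbf{The direct ratio.} This is the main obstacle: $\Phi_\theta$ is defined only through its inverse, so $R_\theta$ cannot be split into pieces. I would route it through indices and Lemma \ref{lem:inversemonocity}. Differentiating the product gives
\[
\varepsilon_{\Phi_\theta^{-1}}=(1-\theta)\varepsilon_{\Phi_0^{-1}}+\theta\varepsilon_{\Phi_1^{-1}},
\]
and the same for $\Psi_\theta^{-1}$. Hence
\[
a_{\Phi_\theta^{-1}}\ge(1-\theta)a_{\Phi_0^{-1}}+\theta a_{\Phi_1^{-1}},\qquad b_{\Psi_\theta^{-1}}\le(1-\theta)b_{\Psi_0^{-1}}+\theta b_{\Psi_1^{-1}}.
\]
Combining with the displayed hypothesis gives $a_{\Phi_\theta^{-1}}\ge b_{\Psi_\theta^{-1}}$. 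Inverting via $a_F=1/b_{F^{-1}}$ and $b_F=1/a_{F^{-1}}$ yields $a_{\Psi_\theta}\ge b_{\Phi_\theta}$. Lemma \ref{lem:inversemonocity} applied to the pair $(\Phi_\theta,\Psi_\theta)$ then shows that $\Psi_\theta/\Phi_\theta$ is nondecreasing. Its proof uses only the index inequality and differentiability, not membership in $\Lc\cup\U$, and it also recovers the inverse-ratio statement.

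\textbf{Point to check.} Care is needed that the inverse-index relations hold for the interpolated functions. This requires $\Phi_\theta^{-1}$ to be a strictly increasing $\mathcal{C}^1$ bijection of $[0,\infty)$, which holds because it is a product of two such functions.
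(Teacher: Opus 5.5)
Your proposal is correct. For the direct ratio $R_\theta$ it takes a genuinely different route from the paper, and it is more complete.

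\textbf{The inverse ratio.} Here you argue as the paper does: $\log\tilde R_\theta$ is the convex combination of $\log\tilde R_0$ and $\log\tilde R_1$. You are right that the ratio must be read as $\Phi_\theta^{-1}/\Psi_\theta^{-1}$. The paper's own proof silently switches to $\tilde R_i=\Phi_i^{-1}/\Psi_i^{-1}$. As literally stated, the claim fails already for $\Phi_0=\Phi_1=t$, $\Psi_0=\Psi_1=t^2$, where $\Psi_\theta^{-1}/\Phi_\theta^{-1}=t^{-1/2}$.

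\textbf{The direct ratio.} The paper only says that $R_\theta$ ``follows the same way''. That transfer is not available. Since $\Phi_\theta\neq\Phi_0^{1-\theta}\Phi_1^{\theta}$ in general, $\log R_\theta$ is not a convex combination of $\log R_0$ and $\log R_1$. Nor does pointwise monotonicity of $\Phi_\theta^{-1}/\Psi_\theta^{-1}$ imply that of $\Psi_\theta/\Phi_\theta$. The first compares $\varepsilon_\Phi$ and $\varepsilon_\Psi$ at the different points $\Phi^{-1}(s)$ and $\Psi^{-1}(s)$; the second compares them at the same point.

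Your observation closes this gap. Elasticities of inverses interpolate linearly, so the uniform gap $a_{\Phi_i^{-1}}\ge b_{\Psi_i^{-1}}$ survives to $\theta$ and gives $a_{\Psi_\theta}\ge b_{\Phi_\theta}$. Lemma~\ref{lem:inversemonocity}, whose proof indeed uses only this inequality and differentiability, then yields both monotonicities at once.

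\textbf{What each approach buys.} The paper's geometric-mean argument needs no differentiability, but it only works for quantities that really are geometric means, namely the inverse ratio. Yours needs $\mathcal{C}^1$ throughout. In exchange it proves more: the class of pairs with $a_\Psi\ge b_\Phi$ is itself closed under this interpolation, so the hypothesis propagates to $(\Phi_\theta,\Psi_\theta)$.

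\textbf{A small fix.} $b_{\Phi_i}>0$ does not make $\Phi_i$ strictly increasing with $\Phi_i'>0$; it only says the elasticity is positive somewhere. You need $\Phi_i'>0$ for $\Phi_i^{-1}$ to be $\mathcal{C}^1$, and the paper assumes this tacitly too. With that assumption, your bound $\varepsilon_{\Phi_\theta^{-1}}\ge(1-\theta)/b_{\Phi_0}+\theta/b_{\Phi_1}>0$ gives $(\Phi_\theta^{-1})'>0$. Hence $\Phi_\theta$ is $\mathcal{C}^1$ and the inverse-index relations apply to it.
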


\begin{proof}
By the previous lemma, $\frac{\Psi}{\Phi}$ and $\frac{\Phi^{-1}}{\Psi^{-1}}$ are both nondecreasing. Let us show that ratio $\tilde{R}_\theta(t) = \Psi_\theta^{-1}(t)/\Phi_\theta^{-1}(t)$ is nondecreasing on $\R_+$ for every $\theta \in [0,1]$.
Observe that
\[
\log \tilde{R}_\theta(t) = (1-\theta)\log \tilde{R}_0(t) + \theta \log \tilde{R}_1(t),
\]
where $\tilde{R}_i=\frac{\Phi_i^{-1}}{\Psi_i^{-1}}$, $i=0,1$. Since $\tilde{R}_0$ and $\tilde{R}_1$ are nondecreasing and positive, for $s \le t$ we have
\[
\frac{\tilde{R}_\theta(t)}{\tilde{R}_\theta(s)} = \left(\frac{\tilde{R}_0(t)}{\tilde{R}_0(s)}\right)^{1-\theta}
\left(\frac{\tilde{R}_1(t)}{\tilde{R}_1(s)}\right)^{\theta} \ge 1^{1-\theta} \cdot 1^{\theta} = 1.
\]
Thus $\tilde{R}_\theta(t) \ge \tilde{R}_\theta(s)$, so $\tilde{R}_\theta$ is nondecreasing. That $R$ is nondecreasing follows the same way. The proof is complete.
\end{proof}

\subsection{The three‑lines lemma}
We recall the following Hadamard's three‑lines lemma. We will use it to give an alternative proof of one of the convexity results. For a proof, see e.g. \cite[Chapter I, Theorem 5.1]{Garnett}.
\begin{lemma}\label{lem:threelines}
Let $F(z)$ be analytic on the open strip $0 < \operatorname{Re} z < 1$ and continuous on its closure, and suppose $|F(z)| \le M_0$ on the line $\operatorname{Re} z = 0$ and $|F(z)| \le M_1$ on the line $\operatorname{Re} z = 1$. Then for any $z$ with $\operatorname{Re} z = \theta$ ($0<\theta<1$),
\[
|F(z)| \le M_0^{1-\theta} M_1^{\theta}.
\]
\end{lemma}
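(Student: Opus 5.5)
The plan is the standard Phragmén--Lindelöf argument: reduce to a bounded function, compare with a harmonic majorant, and then remove the reduction.

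\emph{Step 1 (reduction to boundedness and normalization).} The statement as written does not assume $F$ is bounded on the closed strip $S=\{0\le \operatorname{Re} z\le 1\}$. Without some growth restriction it is false: $F(z)=\exp(-i e^{i\pi z})$ has modulus $1$ on both boundary lines but is unbounded inside. So I will prove it under the standard implicit hypothesis, as in \cite{Garnett}, that $F$ is bounded on $S$, say $|F|\le B$.

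We may assume $M_0,M_1>0$; otherwise we replace them by $M_j+\varepsilon$ and let $\varepsilon\to0$ at the end. Put
\[
G(z)=F(z)\,M_0^{z-1}M_1^{-z}.
\]
Then $|M_0^{z-1}M_1^{-z}|=M_0^{\operatorname{Re} z-1}M_1^{-\operatorname{Re} z}$, which depends only on $\operatorname{Re} z$ and is bounded on $S$. Hence $G$ is bounded and analytic on the open strip, continuous on $S$, and satisfies $|G|\le 1$ on both boundary lines. It suffices to show $|G|\le1$ on $S$, because at $\operatorname{Re} z=\theta$ this reads $|F(z)|\le M_0^{1-\theta}M_1^{\theta}$.

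\emph{Step 2 (damping factor and maximum principle).} For $\varepsilon>0$ set
\[
G_\varepsilon(z)=G(z)e^{\varepsilon z^2}.
\]
With $z=x+iy$ we have $|e^{\varepsilon z^2}|=e^{\varepsilon(x^2-y^2)}\le e^{\varepsilon}e^{-\varepsilon y^2}$. Consequently:
\begin{itemize}
\item $|G_\varepsilon|\le e^{\varepsilon}$ on both boundary lines;
\item $G_\varepsilon(z)\to0$ as $|y|\to\infty$ uniformly in $x\in[0,1]$, since $G$ is bounded.
\end{itemize}
Choose $Y$ so large that $|G_\varepsilon|\le e^{\varepsilon}$ whenever $|y|\ge Y$. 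Applying the maximum modulus principle on the rectangle $[0,1]\times[-Y,Y]$ gives $|G_\varepsilon|\le e^{\varepsilon}$ on the whole rectangle, and hence on all of $S$.

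\emph{Step 3 (removing the damping).} Fix $z\in S$. Then
\[
|G(z)|\le e^{\varepsilon}\,|e^{-\varepsilon z^2}|=e^{\varepsilon(1-x^2+y^2)}.
\]
Letting $\varepsilon\to0$ gives $|G(z)|\le1$, which is the claim.

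The main obstacle is Step 2. The maximum principle cannot be applied directly on an unbounded strip, and the factor $e^{\varepsilon z^2}$ is what forces decay at infinity. This is exactly where the boundedness of $F$, or at least growth slower than $\exp(e^{\pi|y|})$, is needed.
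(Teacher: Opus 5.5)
Your proof is correct. It is the standard textbook argument: normalize by $M_0^{z-1}M_1^{-z}$, damp with $e^{\varepsilon z^2}$, apply the maximum principle on rectangles, and let $\varepsilon\to 0$. The paper gives no proof of its own and only cites Garnett for this classical result, so there is nothing further to compare.

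Your caveat is also right. As stated, the lemma omits the hypothesis that $F$ be bounded (or of restricted growth) on the closed strip. Your example $F(z)=\exp(-ie^{i\pi z})$, with $|F(x+iy)|=\exp(e^{-\pi y}\sin \pi x)$, shows the lemma fails without it. This does not affect the paper, because its only application is to $G_t(z)$, the exponential of an affine function of $z$, which is bounded on the strip.
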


\section{Main results}
We state and prove our main results in this section.

\subsection{Convexity of $\mathcal{E}$} 

\begin{theorem}\label{thm:convexity}
Let $\Phi_1$ be a growth function such that $\Phi_1^{-1}\in \Delta_{\log}^-$ with constant $C_1\geq 1$, and let $\Phi_2$ be a growth function such that $\Phi_2^{-1}\in \Delta_{\log}^+$ with constant
$C_2 \ge 1$. Then
the set
\[
\mathcal{E} = \left\{ (\alpha,\beta) \in \R^2 \;\middle|\;
\alpha, \beta \ge -1, \quad
\exists\, C,K > 0 \;\text{ s.t. }\;
\Phi_1^{-1}(t^{2+\alpha}) \le C\,\Phi_2^{-1}(Kt^{2+\beta})
\;\; \forall\, t > 0
\right\}
\]
is convex.
\end{theorem}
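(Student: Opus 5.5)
Let $(\alpha_0,\beta_0),(\alpha_1,\beta_1)\in\mathcal{E}$ with constants $C_0,K_0$ and $C_1',K_1$ respectively, and fix $\theta\in[0,1]$. Put $\alpha_\theta=(1-\theta)\alpha_0+\theta\alpha_1$ and $\beta_\theta=(1-\theta)\beta_0+\theta\beta_1$. The constraint $\alpha_\theta,\beta_\theta\ge -1$ is immediate, so it remains to produce constants $C,K$ with
\[
\Phi_1^{-1}(t^{2+\alpha_\theta}) \le C\,\Phi_2^{-1}(K t^{2+\beta_\theta})\quad\forall t>0.
\]
The approach is to write each power as a geometric mean and push it through the inverse growth functions using the two log-conditions of Definition~\ref{def:logconv}. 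We use log-convexity of $\Phi_1^{-1}$ on the left side and log-concavity of $\Phi_2^{-1}$ on the right side.

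\emph{Step 1 (left side).} Since $2+\alpha_\theta=(1-\theta)(2+\alpha_0)+\theta(2+\alpha_1)$, we have
\[
t^{2+\alpha_\theta}=(t^{2+\alpha_1})^{\theta}(t^{2+\alpha_0})^{1-\theta}.
\]
Apply $\Phi_1^{-1}\in\Delta_{\log}^-$ with $u=t^{2+\alpha_1}$ and $v=t^{2+\alpha_0}$. This gives
\[
\Phi_1^{-1}(t^{2+\alpha_\theta})\le C_1^{\theta(1-\theta)}\,\Phi_1^{-1}(t^{2+\alpha_1})^{\theta}\,\Phi_1^{-1}(t^{2+\alpha_0})^{1-\theta}.
\]

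\emph{Step 2 (use membership).} Insert the two embedding inequalities for $(\alpha_0,\beta_0)$ and $(\alpha_1,\beta_1)$. The bound becomes
\[
C_1^{\theta(1-\theta)}\,C_0^{1-\theta}(C_1')^{\theta}\;\Phi_2^{-1}(K_1t^{2+\beta_1})^{\theta}\,\Phi_2^{-1}(K_0t^{2+\beta_0})^{1-\theta}.
\]

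\emph{Step 3 (right side).} Apply $\Phi_2^{-1}\in\Delta_{\log}^+$ with $u=K_1t^{2+\beta_1}$ and $v=K_0t^{2+\beta_0}$. This gives
\[
\Phi_2^{-1}(K_1t^{2+\beta_1})^{\theta}\,\Phi_2^{-1}(K_0t^{2+\beta_0})^{1-\theta}\le \Phi_2^{-1}\!\left(C_2^{\theta(1-\theta)}K_1^{\theta}K_0^{1-\theta}\,t^{2+\beta_\theta}\right).
\]
Combining the three steps, the required inequality holds with
\[
C=C_1^{\theta(1-\theta)}C_0^{1-\theta}(C_1')^{\theta},\qquad K=C_2^{\theta(1-\theta)}K_1^{\theta}K_0^{1-\theta}.
\]
Both constants are positive and independent of $t$, so $(\alpha_\theta,\beta_\theta)\in\mathcal{E}$.

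\emph{Main point to check.} The substantive issue is that the hypotheses are exactly aligned with the direction of the inequalities. We need an \emph{upper} bound for $\Phi_1^{-1}$ at a geometric mean, which is what $\Delta_{\log}^-$ supplies. We need a \emph{lower} bound for $\Phi_2^{-1}$ at a geometric mean, which is what $\Delta_{\log}^+$ supplies; there the multiplicative constant enters inside the argument, and this is precisely why the free constant $K$ appears in the definition of $\mathcal{E}$. Two minor points remain. The endpoint cases $\theta\in\{0,1\}$ are trivial. Since $t>0$, all arguments are positive, as Definition~\ref{def:logconv} requires.
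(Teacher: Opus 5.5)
Your proof is correct and follows the paper's argument step for step. You apply log-convexity of $\Phi_1^{-1}$ at the geometric mean of $t^{2+\alpha_0}$ and $t^{2+\alpha_1}$, raise the two membership inequalities to the powers $1-\theta$ and $\theta$ and multiply them, and then use log-concavity of $\Phi_2^{-1}$, which places the constant $C_2^{\theta(1-\theta)}$ inside the argument so that it is absorbed into $K$. Your final constants agree with the paper's $C_\theta$ and $K_\theta$, and you correctly avoid the stray outer factor $C_2^{\theta(1-\theta)}$ that appears in one of the paper's intermediate displays.
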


\begin{proof}
Let $(\alpha_0, \beta_0), (\alpha_1, \beta_1) \in \mathcal{E}$. By definition,
there exist constants $M_0, M_1 > 0$ and $K_0, K_1 > 0$ such that for all $t > 0$,
\begin{align}
\Phi_1^{-1}\!\left( t^{2+\alpha_0} \right) &\le M_0\,\Phi_2^{-1}\!\left( K_0 t^{2+\beta_0} \right), \label{eq:e0}\\
\Phi_1^{-1}\!\left( t^{2+\alpha_1} \right) &\le M_1\,\Phi_2^{-1}\!\left( K_1 t^{2+\beta_1} \right). \label{eq:e1}
\end{align}
Fix $\theta \in [0,1]$ and set
\[
\alpha_\theta = (1-\theta)\alpha_0 + \theta\alpha_1, \qquad
\beta_\theta = (1-\theta)\beta_0 + \theta\beta_1.
\]
Let us show that $(\alpha_\theta, \beta_\theta) \in \mathcal{E}$.

For any $t > 0$, write
\[
t^{2+\alpha_\theta}
= \left(t^{2+\alpha_0}\right)^{1-\theta}
\cdot \left(t^{2+\alpha_1}\right)^{\theta}.
\]
Since $\Phi_1^{-1}\in\Delta_{\log}^-$ with constant $C_1\ge 1$, it holds that for all $u,v>0$,
\[
\Phi_1^{-1}\!\left(u^{1-\theta}v^{\theta}\right) \le C_1^{\theta(1-\theta)}\,
\Phi_1^{-1}(u)^{1-\theta}\,\Phi_1^{-1}(v)^{\theta}.
\]
Applying this with $u = t^{2+\alpha_0}$ and $v = t^{2+\alpha_1}$ gives
\begin{equation}\label{eq:step1}
\Phi_1^{-1}(t^{2+\alpha_\theta})
\le C_1^{\theta(1-\theta)}\,
\Phi_1^{-1}(t^{2+\alpha_0})^{1-\theta}\,
\Phi_1^{-1}(t^{2+\alpha_1})^{\theta}.
\end{equation}

Raising \eqref{eq:e0} to the power $(1-\theta)$ and \eqref{eq:e1}
to the power $\theta$, gives 
\begin{align}
\Phi_1^{-1}(t^{2+\alpha_0})^{1-\theta}
&\le M_0^{1-\theta}\,\Phi_2^{-1}(K_0 t^{2+\beta_0})^{1-\theta},\label{eq:pow0}\\
\Phi_1^{-1}(t^{2+\alpha_1})^{\theta}
&\le M_1^{\theta}\,\Phi_2^{-1}(K_1 t^{2+\beta_1})^{\theta}.\label{eq:pow1}
\end{align}
Then multiplying \eqref{eq:pow0} and \eqref{eq:pow1}, we get
\begin{equation}\label{eq:step2}
\Phi_1^{-1}(t^{2+\alpha_0})^{1-\theta}\,
\Phi_1^{-1}(t^{2+\alpha_1})^{\theta}
\le M_0^{1-\theta} M_1^{\theta}\,
\Phi_2^{-1}(K_0 t^{2+\beta_0})^{1-\theta}\,
\Phi_2^{-1}(K_1 t^{2+\beta_1})^{\theta}.
\end{equation}

Since $\Phi_2^{-1}\in\Delta_{\log}^+$ with constant $C_2\ge 1$, it holds that for all $u,v>0$,
\[
\Phi_2^{-1}(u)^{1-\theta}\Phi_2^{-1}(v)^{\theta}
\le 
\Phi_2^{-1}\!\left(C_2^{\theta(1-\theta)}\,u^{1-\theta}v^{\theta}\right).
\]
We apply this with $u = K_0 t^{2+\beta_0}$ and $v = K_1 t^{2+\beta_1}$. For this, we observe that
\[
u^{1-\theta}v^{\theta}
= (K_0 t^{2+\beta_0})^{1-\theta} (K_1 t^{2+\beta_1})^{\theta}
= K_0^{1-\theta}K_1^{\theta} \,
t^{(1-\theta)(2+\beta_0)+\theta(2+\beta_1)},
\]
and $(1-\theta)(2+\beta_0)+\theta(2+\beta_1) = 2+\beta_\theta$. Therefore
\[
u^{1-\theta}v^{\theta} = L_\theta \, t^{2+\beta_\theta},
\]
where we set $L_\theta := K_0^{1-\theta}K_1^{\theta}$. Consequently,
\begin{equation}\label{eq:step3}
\Phi_2^{-1}(K_0 t^{2+\beta_0})^{1-\theta}\,
\Phi_2^{-1}(K_1 t^{2+\beta_1})^{\theta}
\le 
\Phi_2^{-1}\!\left(C_2^{\theta(1-\theta)}\,L_\theta t^{2+\beta_\theta}\right).
\end{equation}

Substituting \eqref{eq:step3} into \eqref{eq:step2} yields
\[
\Phi_1^{-1}(t^{2+\alpha_0})^{1-\theta}\,
\Phi_1^{-1}(t^{2+\alpha_1})^{\theta}
\le M_0^{1-\theta} M_1^{\theta} C_2^{\theta(1-\theta)}\,
\Phi_2^{-1}\!\left(C_2^{\theta(1-\theta)}\,L_\theta t^{2+\beta_\theta}\right).
\]
Finally, inserting this into \eqref{eq:step1} gives
\[
\Phi_1^{-1}(t^{2+\alpha_\theta})
\le C_1^{\theta(1-\theta)} M_0^{1-\theta} M_1^{\theta} C_2^{\theta(1-\theta)}\,
\Phi_2^{-1}\!\left(C_2^{\theta(1-\theta)}\,L_\theta t^{2+\beta_\theta}\right)
= C_\theta \, \Phi_2^{-1}\!\left(K_\theta t^{2+\beta_\theta}\right),
\]
where $C_\theta := M_0^{1-\theta} M_1^{\theta} C_1^{\theta(1-\theta)}$ and $K_\theta=C_2^{\theta(1-\theta)}\,L_\theta$. This shows $(\alpha_\theta,\beta_\theta) \in \mathcal{E}$. The proof is complete.
\end{proof}

We derive the following fact.
\begin{corollary}
The set $\mathcal{E}$ is an epigraph. More precisely, there exists a convex, nondecreasing function $\beta^*: [-1,\infty) \to [-1,\infty]$ such that
\[
\mathcal{E} = \{ (\alpha,\beta) : \alpha \ge -1,\ \beta \ge \beta^*(\alpha) \}.
\]
Moreover, $\beta^*$ is given by
\[
\beta^*(\alpha) = \inf\left\{ \beta \ge -1 \;:\; \exists C,K>0,\ 
\Phi_1^{-1}(t^{2+\alpha}) \le C\,\Phi_2^{-1}(Kt^{2+\beta}) \ \forall t>0 \right\}.
\]
\end{corollary}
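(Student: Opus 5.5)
The plan is to derive the corollary from the convexity in Theorem \ref{thm:convexity} together with two monotonicity properties of $\mathcal{E}$. The first is \emph{upward closedness in $\beta$}: if $(\alpha,\beta)\in\mathcal{E}$ and $\beta'\ge\beta$, then $(\alpha,\beta')\in\mathcal{E}$. The second is \emph{downward closedness in $\alpha$}: if $(\alpha,\beta)\in\mathcal{E}$ and $-1\le\alpha'\le\alpha$, then $(\alpha',\beta)\in\mathcal{E}$. Both should follow from the monotonicity of $\Phi_1^{-1}$ and $\Phi_2^{-1}$. For $t\ge 1$ we have $t^{2+\alpha'}\le t^{2+\alpha}$ and $t^{2+\beta}\le t^{2+\beta'}$, hence
\[
\Phi_1^{-1}(t^{2+\alpha'})\le \Phi_1^{-1}(t^{2+\alpha})\le C\,\Phi_2^{-1}(Kt^{2+\beta})\le C\,\Phi_2^{-1}(Kt^{2+\beta'}).
\]
Granting these, I define $\beta^*$ by the stated infimum, with $\inf\emptyset=+\infty$. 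Upward closedness gives $\{\beta:(\alpha,\beta)\in\mathcal{E}\}\supseteq(\beta^*(\alpha),\infty)$. Downward closedness in $\alpha$ shrinks the admissible $\beta$-set as $\alpha$ grows, so $\beta^*$ is nondecreasing.

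Convexity of $\beta^*$ then follows in the standard way. Take $\alpha_0,\alpha_1$ with $\beta^*(\alpha_i)<\infty$, $\varepsilon>0$, and $\beta_i\in(\beta^*(\alpha_i),\beta^*(\alpha_i)+\varepsilon)$ with $(\alpha_i,\beta_i)\in\mathcal{E}$, which is possible by upward closedness. Theorem \ref{thm:convexity} puts $(\alpha_\theta,\beta_\theta)\in\mathcal{E}$, so $\beta^*(\alpha_\theta)\le(1-\theta)\beta^*(\alpha_0)+\theta\beta^*(\alpha_1)+\varepsilon$, and we let $\varepsilon\to0$. The case where some $\beta^*(\alpha_i)=+\infty$ is trivial.

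The main obstacle is the range $0<t<1$. There the inequalities above reverse. For pure powers $\Phi_j(t)=t^{p_j}$, raising $\beta$ actually \emph{destroys} the inequality as $t\to0$, since $t^{(2+\beta')/p_2}\ll t^{(2+\beta)/p_2}$. So with the literal quantifier ``$\forall t>0$'' upward closedness can fail, and $\mathcal{E}$ need not be an epigraph. I would first try to absorb small $t$ using the constants $C,K$ together with the $\Delta^{\pm}_{\log}$ hypotheses. Failing that, I would work, as in the introduction, with the condition required only for $t\ge1$; that is the form equivalent to the embedding. Theorem \ref{thm:convexity}'s proof goes through verbatim on $t\ge1$, and the argument above is then complete.

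A second, smaller issue is whether the boundary $\beta=\beta^*(\alpha)$ belongs to $\mathcal{E}$, i.e.\ whether the infimum is attained. I would handle this either by reading ``epigraph'' up to the boundary, so that $\mathcal{E}$ lies between the strict and non-strict epigraphs (which suffices for convexity and monotonicity of $\beta^*$), or by a limiting argument. That argument would use the upper/lower type of $\Phi_2$ to pass from $\beta_k\downarrow\beta^*$ to $\beta^*$ at the cost of enlarging $K$. I expect this to need extra hypotheses, so I would flag it rather than force it.
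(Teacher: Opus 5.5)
Your diagnosis is correct, and it is exactly where the paper's own proof fails. The paper derives upward closedness in $\beta$ from ``$t^{2+\beta_1}\ge t^{2+\beta_0}$ for $t>0$'', and monotonicity of $\beta^*$ from the same claim in $\alpha$. Both are false on $(0,1)$. Your example is a genuine counterexample to the corollary as stated. Take $\Phi_1(t)=\Phi_2(t)=t$, whose inverses lie in $\Delta_{\log}^-\cap\Delta_{\log}^+$ with constant $1$. Then $t^{2+\alpha}\le CKt^{2+\beta}$ for all $t>0$ forces $\alpha=\beta$, so $\mathcal{E}=\{(\alpha,\alpha):\alpha\ge-1\}$. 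This set is convex, as Theorem~\ref{thm:convexity} says, but it is not an epigraph.

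So your first fallback, absorbing small $t$ into $C,K$, cannot work. The $t\ge1$ reading, which is the form of the embedding criterion quoted in the introduction, is the right repair. Under it, your argument is the paper's argument made correct. Your $\varepsilon$-step in the convexity proof is genuinely needed: the paper uses ``$\beta_0\ge\beta^*(\alpha_0)$ implies $(\alpha_0,\beta_0)\in\mathcal{E}$'', which tacitly assumes the infimum is attained and is never justified.

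The remaining gap is the one you flagged. Without attainment you only get $\{\beta>\beta^*(\alpha)\}\subseteq\{\beta:(\alpha,\beta)\in\mathcal{E}\}\subseteq\{\beta\ge\beta^*(\alpha)\}$, not the stated identity. This closes without extra hypotheses. Put $a=\liminf_{s\to\infty}\log\Phi_1^{-1}(s)/\log s$ and $b=\limsup_{s\to\infty}\log\Phi_2^{-1}(s)/\log s$.

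\begin{itemize}
\item For $1<s<S$, apply the $\Delta_{\log}^-$ inequality with $u=S$, $v=1$, $\theta=\log s/\log S$, so that $u^\theta v^{1-\theta}=s$. Let $S\to\infty$ along a sequence realizing $a$. This gives $\Phi_1^{-1}(s)\le A\,s^{a}$ for $s\ge1$ (when $a<\infty$). It also gives $a>0$, since $\Phi_1^{-1}$ is unbounded.
\item Make the same choice in the $\Delta_{\log}^+$ inequality, using $\theta(1-\theta)\le 1/4$, $C_2\ge1$ and monotonicity, with $S$ along a sequence realizing $b$. This gives $\Phi_2^{-1}(C_2^{1/4}s)\ge B^{-1}s^{b}$ for $s\ge1$, and in particular $b<\infty$.
\end{itemize}

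Hence, in the $t\ge1$ version, $(\alpha,\beta)\in\mathcal{E}$ if and only if $a(2+\alpha)\le b(2+\beta)$. Sufficiency follows from the two bounds with $K=C_2^{1/4}$. Necessity follows by taking logarithms, dividing by $\log t$, and comparing $\liminf$ and $\limsup$ as $t\to\infty$; if $a=\infty$, this shows $\mathcal{E}=\emptyset$.

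So $\mathcal{E}$ is a closed half-plane intersected with $[-1,\infty)^2$, and the infimum is attained. Explicitly, $\beta^*(\alpha)=\max\{-1,\tfrac{a}{b}(2+\alpha)-2\}$, with $a/b=+\infty$ when $b=0$ or $a=\infty$. This also gives convexity and monotonicity of $\beta^*$ directly.
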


\begin{proof}
We first note that if $(\alpha,\beta_0)\in\mathcal{E}$ and $\beta_1 \ge \beta_0$, then $(\alpha,\beta_1)\in\mathcal{E}$. Indeed, for $t>0$, $t^{2+\beta_1} \ge t^{2+\beta_0}$ (since $2+\beta_1 \ge 2+\beta_0$). Because $\Phi_2^{-1}$ is increasing, we have $\Phi_2^{-1}(Kt^{2+\beta_1}) \ge \Phi_2^{-1}(Kt^{2+\beta_0})$. Hence the inequality $\Phi_1^{-1}(t^{2+\alpha}) \le C\Phi_2^{-1}(Kt^{2+\beta_0})$ implies the same with $\beta_1$ in place of $\beta_0$ (using the same $C$). Therefore $\mathcal{E}$ is an upward-closed set in the $\beta$ direction. This allows us to define the boundary function $\beta^*(\alpha)$ as the infimum over all $\beta$ such that $(\alpha,\beta)\in\mathcal{E}$.

\medskip
The convexity of $\mathcal{E}$ (Theorem~\ref{thm:convexity}) implies that $\beta^*$ is convex. Indeed, for any $\alpha_0,\alpha_1$ and $\theta\in[0,1]$, take any $\beta_0\ge\beta^*(\alpha_0)$ and $\beta_1\ge\beta^*(\alpha_1)$. Then $(\alpha_0,\beta_0),(\alpha_1,\beta_1)\in\mathcal{E}$, so by convexity $(\alpha_\theta,\beta_\theta)\in\mathcal{E}$ with $\beta_\theta = (1-\theta)\beta_0+\theta\beta_1$. Taking infimum over $\beta_0,\beta_1$ yields $\beta^*(\alpha_\theta) \le (1-\theta)\beta^*(\alpha_0)+\theta\beta^*(\alpha_1)$, i.e., $\beta^*$ is convex.

\medskip
Let us check that $\beta^*$ is nondecreasing. If $\alpha_1 \ge \alpha_0$ and $(\alpha_0,\beta)\in\mathcal{E}$, then for $t>0$, $t^{2+\alpha_1} \ge t^{2+\alpha_0}$, so $\Phi_1^{-1}(t^{2+\alpha_1}) \ge \Phi_1^{-1}(t^{2+\alpha_0})$. This implies for any $\beta_1$ such that $\Phi_1^{-1}(t^{2+\alpha_1})\leq C\Phi_2^{-1}(Kt^{2+\beta_1})$, we also have $\Phi_1^{-1}(t^{2+\alpha_0})\leq C\Phi_2^{-1}(Kt^{2+\beta_1})$. Hence $\beta^*(\alpha_0)\leq \beta_1$. This implies $\beta^*(\alpha_1) \ge \beta^*(\alpha_0)$. Thus $\beta^*$ is nondecreasing. The proof is complete.
\end{proof}

\subsection{Log-Convexity of $\mathcal{F}$ and interpolation of Bergman-Orlicz space embeddings}

We now consider the dual problem: fix the weights $\alpha,\beta \ge -1$ and
consider the set of growth function pairs.

\begin{theorem}[Inverse log-convexity of $\mathcal{F}$]\label{thm:inverse-convexity-F}
Let $\alpha,\beta \ge -1$ be fixed. Define
\[
\mathcal{F} = \left\{ (\Phi_1,\Phi_2) \in \mathcal{G}^2 \;\middle|\;
\exists\, C > 0 \text{ such that }
\Phi_1^{-1}\!\left( t^{2+\alpha} \right) \le C\,
\Phi_2^{-1}\!\left( t^{2+\beta} \right) \;\; \forall\, t>0
\right\}.
\]
Then $\mathcal{F}$ is inverse log-convex. That is, if $(\Phi_0,\Psi_0), (\Phi_1,\Psi_1) \in \mathcal{F}$,
then for any $\theta \in (0,1)$ the pair $(\Phi_\theta,\Psi_\theta)$ defined by
\[
\Phi_\theta^{-1}(t) = \Phi_0^{-1}(t)^{1-\theta} \, \Phi_1^{-1}(t)^{\theta},
\qquad
\Psi_\theta^{-1}(t) = \Psi_0^{-1}(t)^{1-\theta} \, \Psi_1^{-1}(t)^{\theta}
\]
also belongs to $\mathcal{F}$.
\end{theorem}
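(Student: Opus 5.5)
The argument is a direct pointwise one: multiply the two admissible inequalities, with the exponents $1-\theta$ and $\theta$, at the same point $t$. Let $(\Phi_0,\Psi_0),(\Phi_1,\Psi_1)\in\mathcal{F}$ with constants $C_0,C_1>0$. Then for all $t>0$,
\[
\Phi_0^{-1}(t^{2+\alpha})\le C_0\,\Psi_0^{-1}(t^{2+\beta}),\qquad
\Phi_1^{-1}(t^{2+\alpha})\le C_1\,\Psi_1^{-1}(t^{2+\beta}).
\]
All quantities are nonnegative, so we may raise the first inequality to the power $1-\theta$ and the second to the power $\theta$, and then multiply them.

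By the defining formula of the interpolated inverses, evaluated at the points $s=t^{2+\alpha}$ and $s=t^{2+\beta}$, this gives
\[
\Phi_\theta^{-1}(t^{2+\alpha})=\Phi_0^{-1}(t^{2+\alpha})^{1-\theta}\Phi_1^{-1}(t^{2+\alpha})^{\theta}
\le C_0^{1-\theta}C_1^{\theta}\,\Psi_0^{-1}(t^{2+\beta})^{1-\theta}\Psi_1^{-1}(t^{2+\beta})^{\theta}
=C_0^{1-\theta}C_1^{\theta}\,\Psi_\theta^{-1}(t^{2+\beta}).
\]
Hence the required inequality holds with constant $C_\theta=C_0^{1-\theta}C_1^{\theta}$. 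This also yields the announced bound $C_{\min}(\Phi_\theta,\Psi_\theta)\le C_{\min}(\Phi_0,\Psi_0)^{1-\theta}C_{\min}(\Phi_1,\Psi_1)^{\theta}$ after taking infima. No log-convexity or log-concavity hypothesis is needed here, because $\alpha,\beta$ are fixed: unlike Theorem~\ref{thm:convexity}, no inverse function is evaluated at interpolated arguments.

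The only genuinely non-computational step, and the one I expect to be the main obstacle, is checking that $(\Phi_\theta,\Psi_\theta)\in\mathcal{G}^2$. That is, we must show that the functions defined through their inverses really are growth functions. I would argue as follows. Each $\Phi_i^{-1}$ is increasing, vanishes at $0$, and tends to $\infty$; this holds for the inverse of a growth function, understood as a strictly increasing continuous bijection of $[0,\infty)$, or via the generalized inverse otherwise. A geometric mean $\Phi_0^{-1}(t)^{1-\theta}\Phi_1^{-1}(t)^{\theta}$ of such functions inherits all three properties: it is increasing as a product of nonnegative increasing functions, equals $0$ at $0$, and tends to $\infty$. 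It is therefore a bijection of $[0,\infty)$, and its inverse $\Phi_\theta$ is a growth function; the same holds for $\Psi_\theta$.

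If one prefers, the alternative route via Lemma~\ref{lem:threelines} also works. Apply it to $F(z)=C_0^{-(1-z)}C_1^{-z}\,\bigl(\Phi_0^{-1}(t^{2+\alpha})/\Psi_0^{-1}(t^{2+\beta})\bigr)^{1-z}\bigl(\Phi_1^{-1}(t^{2+\alpha})/\Psi_1^{-1}(t^{2+\beta})\bigr)^{z}$ for fixed $t$. It is bounded by $1$ on both boundary lines, so it is bounded by $1$ at $z=\theta$, which gives the same estimate.
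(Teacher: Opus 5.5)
Your proposal is correct and is essentially the paper's second (algebraic) proof: you raise the two defining inequalities at the same $t$ to the powers $1-\theta$ and $\theta$ and multiply, giving $C_\theta=C_0^{1-\theta}C_1^{\theta}$, and your three-lines variant with the normalizing factor $C_0^{-(1-z)}C_1^{-z}$ matches the paper's first proof. Your extra check that $\Phi_\theta,\Psi_\theta$ are genuine growth functions is a point the paper leaves implicit; it relies on continuity of the $\Phi_i^{-1}$ so that the geometric mean is a bijection of $[0,\infty)$.
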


\begin{proof}[First proof: through the three-lines lemma]
Let $t > 0$ be arbitrary. For each fixed $t$, define the functions
\[
u_0(t) = \log \Phi_0^{-1}(t^{2+\alpha}) - \log \Psi_0^{-1}(t^{2+\beta}),
\]
\[
u_1(t) = \log \Phi_1^{-1}(t^{2+\alpha}) - \log \Psi_1^{-1}(t^{2+\beta}).
\]
The hypothesis $(\Phi_0,\Psi_0) \in \mathcal{F}$ implies that there exists a constant
$C_0 > 0$ such that for all $t > 0$,
\begin{equation}\label{eq:firstproof1}
u_0(t) \le \log C_0. 
\end{equation}
Similarly, there exists $C_1 > 0$ such that for all $t > 0$,
\begin{equation}\label{eq:firstproof2}
u_1(t) \le \log C_1. 
\end{equation}

For $z \in \mathbb{C}$ with $0 \le \operatorname{Re} z \le 1$, define
\[
\Phi_z^{-1}(t) = \Phi_0^{-1}(t)^{1-z} \, \Phi_1^{-1}(t)^{z},
\qquad
\Psi_z^{-1}(t) = \Psi_0^{-1}(t)^{1-z} \, \Psi_1^{-1}(t)^{z}.
\]
For each fixed $t > 0$, the functions
\[
\log \Phi_z^{-1}(t) = (1-z)\log \Phi_0^{-1}(t) + z \log \Phi_1^{-1}(t),
\]
\[
\log \Psi_z^{-1}(t) = (1-z)\log \Psi_0^{-1}(t) + z \log \Psi_1^{-1}(t)
\]
are affine in $z$. Consequently, the function
\[
G_t(z) = \exp\Big( \log \Phi_z^{-1}(t^{2+\alpha}) - \log \Psi_z^{-1}(t^{2+\beta}) \Big)
= \Phi_z^{-1}(t^{2+\alpha}) \cdot \Psi_z^{-1}(t^{2+\beta})^{-1}
\]
is an exponential of an affine function in $z$, hence analytic in $z$.

On the line $z = it$, we have
\[
\log \Phi_{it}^{-1}(t^{2+\alpha}) = (1-it)\log \Phi_0^{-1}(t^{2+\alpha}) + it \log \Phi_1^{-1}(t^{2+\alpha}).
\]
The real part is $\log \Phi_0^{-1}(t^{2+\alpha})$. Similarly,
the real part of $\log \Psi_{it}^{-1}(t^{2+\beta})$ is $\log \Psi_0^{-1}(t^{2+\beta})$.
Therefore,
\[
|G_t(it)| = \exp\Big( \log \Phi_0^{-1}(t^{2+\alpha}) - \log \Psi_0^{-1}(t^{2+\beta}) \Big)
= \frac{\Phi_0^{-1}(t^{2+\alpha})}{\Psi_0^{-1}(t^{2+\beta})} \le C_0,
\]
by (\ref{eq:firstproof1}). On the line $z = 1+it$,
\[
|G_t(1+it)| = \frac{\Phi_1^{-1}(t^{2+\alpha})}{\Psi_1^{-1}(t^{2+\beta})} \le C_1,
\]
by (\ref{eq:firstproof2}).

For each fixed $t > 0$, $G_t(z)$ is analytic on the strip  $0\leq \Re z\leq 1$ and uniformly bounded
(the bound depends on $t$ but is finite pointwise). The three-lines lemma gives,
for $z = \theta$,
\[
|G_t(\theta)| \le C_0^{1-\theta} C_1^{\theta}.
\]

But $G_t(\theta)$ is precisely
\[
G_t(\theta) = \Phi_\theta^{-1}(t^{2+\alpha}) \cdot \Psi_\theta^{-1}(t^{2+\beta})^{-1},
\]
where $\Phi_\theta^{-1}$ and $\Psi_\theta^{-1}$ are defined as in the theorem statement.
Thus, for all $t > 0$,
\[
\Phi_\theta^{-1}(t^{2+\alpha}) \le C_\theta \, \Psi_\theta^{-1}(t^{2+\beta}),
\]
with $C_\theta = C_0^{1-\theta} C_1^{\theta}$.

This is exactly the condition that $(\Phi_\theta,\Psi_\theta) \in \mathcal{F}$.
Therefore, $\mathcal{F}$ is inverse log-convex.
\end{proof}

\begin{proof}[Second proof: an algebraic approach]
This proof is straightforward and uses only elementary algebra.

Since $(\Phi_0,\Psi_0) \in \mathcal{F}$, there exists $C_0 > 0$ such that for all $t > 0$,
\begin{equation}\label{eq:secondproof1}
\Phi_0^{-1}\!\left( t^{2+\alpha} \right) \le C_0 \, \Psi_0^{-1}\!\left( t^{2+\beta} \right).
\end{equation}

Since $(\Phi_1,\Psi_1) \in \mathcal{F}$, there exists $C_1 > 0$ such that for all $t > 0$,
\begin{equation}\label{eq:secondproof2}
\Phi_1^{-1}\!\left( t^{2+\alpha} \right) \le C_1 \, \Psi_1^{-1}\!\left( t^{2+\beta} \right). 
\end{equation}

Now fix $\theta \in (0,1)$. Raise inequality (\ref{eq:secondproof1}) to the power $1-\theta$ and inequality (\ref{eq:secondproof2}) to the power $\theta$ (all quantities are positive, so this preserves the inequalities):
\[
\Phi_0^{-1}\!\left( t^{2+\alpha} \right)^{1-\theta} \le C_0^{1-\theta} \, \Psi_0^{-1}\!\left( t^{2+\beta} \right)^{1-\theta},
\]
\[
\Phi_1^{-1}\!\left( t^{2+\alpha} \right)^{\theta} \le C_1^{\theta} \, \Psi_1^{-1}\!\left( t^{2+\beta} \right)^{\theta}.
\]

Multiply the two inequalities, we obtain
\[
\Phi_0^{-1}\!\left( t^{2+\alpha} \right)^{1-\theta} \Phi_1^{-1}\!\left( t^{2+\alpha} \right)^{\theta}
\le C_0^{1-\theta} C_1^{\theta} \,
\Psi_0^{-1}\!\left( t^{2+\beta} \right)^{1-\theta} \Psi_1^{-1}\!\left( t^{2+\beta} \right)^{\theta}.
\]

But by definition of the interpolated inverses,
\[
\Phi_\theta^{-1}\!\left( t^{2+\alpha} \right) = \Phi_0^{-1}\!\left( t^{2+\alpha} \right)^{1-\theta} \Phi_1^{-1}\!\left( t^{2+\alpha} \right)^{\theta},
\]
\[
\Psi_\theta^{-1}\!\left( t^{2+\beta} \right) = \Psi_0^{-1}\!\left( t^{2+\beta} \right)^{1-\theta} \Psi_1^{-1}\!\left( t^{2+\beta} \right)^{\theta}.
\]

Therefore,
\[
\Phi_\theta^{-1}\!\left( t^{2+\alpha} \right) \le C_\theta \, \Psi_\theta^{-1}\!\left( t^{2+\beta} \right),
\]
where $C_\theta = C_0^{1-\theta} C_1^{\theta}$.

Thus $(\Phi_\theta,\Psi_\theta) \in \mathcal{F}$. The proof is complete.
\end{proof}

We observe the following about the interpolation constant in the above result.
\begin{corollary}
For fixed $(\alpha,\beta)$ and fixed endpoint pairs $(\Phi_0,\Psi_0), (\Phi_1,\Psi_1) \in \mathcal{F}$,
the minimal constant
\[
C_{\min}(\Phi,\Psi) = \sup_{t>0} \frac{\Phi^{-1}(t^{2+\alpha})}{\Psi^{-1}(t^{2+\beta})}
\]
satisfies
\[
C_{\min}(\Phi_\theta,\Psi_\theta) \le C_{\min}(\Phi_0,\Psi_0)^{1-\theta} \,
C_{\min}(\Phi_1,\Psi_1)^{\theta}.
\]
Thus $\log C_{\min}$ is a convex function along log-convex interpolations.
\end{corollary}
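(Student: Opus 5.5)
The natural approach is to reread the second (algebraic) proof of Theorem~\ref{thm:inverse-convexity-F}, this time tracking the constants exactly, and then optimize over them. Set $C_i:=C_{\min}(\Phi_i,\Psi_i)$ for $i=0,1$. These are finite, since $(\Phi_i,\Psi_i)\in\mathcal{F}$ gives some admissible constant, and $C_{\min}$ is the supremum of the ratio, hence no larger than any admissible constant. In fact $C_{\min}$ is itself admissible: by definition of the supremum, for every $t>0$ we have
\[
\Phi_i^{-1}(t^{2+\alpha})\le C_i\,\Psi_i^{-1}(t^{2+\beta}),
\]
so $C_{\min}(\Phi_i,\Psi_i)$ is exactly the smallest constant making the defining inequality of $\mathcal{F}$ hold.

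Next, fix $\theta\in(0,1)$ and $t>0$. Raise the inequality for $i=0$ to the power $1-\theta$ and the one for $i=1$ to the power $\theta$; all quantities are positive, so the inequalities are preserved. Multiplying them and using the definition of $\Phi_\theta^{-1}$ and $\Psi_\theta^{-1}$ gives
\[
\frac{\Phi_\theta^{-1}(t^{2+\alpha})}{\Psi_\theta^{-1}(t^{2+\beta})}
=\left(\frac{\Phi_0^{-1}(t^{2+\alpha})}{\Psi_0^{-1}(t^{2+\beta})}\right)^{1-\theta}\left(\frac{\Phi_1^{-1}(t^{2+\alpha})}{\Psi_1^{-1}(t^{2+\beta})}\right)^{\theta}\le C_0^{1-\theta}C_1^{\theta}.
\]
Taking the supremum over $t>0$ on the left yields $C_{\min}(\Phi_\theta,\Psi_\theta)\le C_0^{1-\theta}C_1^\theta$. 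The endpoint cases $\theta=0$ and $\theta=1$ are trivial equalities.

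For the final assertion, take logarithms: $\log C_{\min}(\Phi_\theta,\Psi_\theta)\le(1-\theta)\log C_0+\theta\log C_1$, which is convexity along the path. To get genuine convexity of $\theta\mapsto\log C_{\min}(\Phi_\theta,\Psi_\theta)$ between arbitrary intermediate parameters $\theta_1,\theta_2$, I would check that interpolating between $\Phi_{\theta_1}$ and $\Phi_{\theta_2}$ with parameter $s$ reproduces $\Phi_{(1-s)\theta_1+s\theta_2}$. This holds because $\log\Phi_\theta^{-1}$ is affine in $\theta$; the same is true for $\Psi$. Applying the inequality just proved to these endpoints then finishes the argument.

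The only subtle point is the positivity of $C_{\min}$, which is needed so that its logarithm is finite. It holds because the ratio of positive quantities is positive at each $t>0$, using $\Phi^{-1}(s)>0$ for $s>0$. Beyond that there is no real obstacle.
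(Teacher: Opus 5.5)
Your proposal is correct and follows the paper's proof. Both arguments use the algebraic argument of Theorem~\ref{thm:inverse-convexity-F} to bound the interpolated ratio pointwise by $C_{\min}(\Phi_0,\Psi_0)^{1-\theta}C_{\min}(\Phi_1,\Psi_1)^{\theta}$ and then take the supremum over $t>0$. Your extra reparametrization remark (that $\log\Phi_\theta^{-1}$ and $\log\Psi_\theta^{-1}$ are affine in $\theta$, so the inequality applies between any two intermediate parameters) correctly justifies the final convexity claim, which the paper states without further argument.
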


\begin{proof}
From the proof of Theorem~\ref{thm:inverse-convexity-F}, for all $t>0$,
\[
\frac{\Phi_\theta^{-1}(t^{2+\alpha})}{\Psi_\theta^{-1}(t^{2+\beta})}
\le \left( \sup_{s>0} \frac{\Phi_0^{-1}(s^{2+\alpha})}{\Psi_0^{-1}(s^{2+\beta})} \right)^{1-\theta}
\left( \sup_{s>0} \frac{\Phi_1^{-1}(s^{2+\alpha})}{\Psi_1^{-1}(s^{2+\beta})} \right)^{\theta}.
\]
Taking the supremum over $t>0$ on the left-hand side yields the desired inequality.
\end{proof}

\begin{example}
Let $\Phi_0(t)=t^{p_0}$, $\Phi_1(t)=t^{p_1}$, $\Psi_0(t)=t^{q_0}$, $\Psi_1(t)=t^{q_1}$.
Then $\Phi_\theta^{-1}(t)=t^{1/p_\theta}$ with $1/p_\theta = (1-\theta)/p_0 + \theta/p_1$,
and similarly $\Psi_\theta^{-1}(t)=t^{1/q_\theta}$.
The condition $(\Phi_i,\Psi_i) \in \mathcal{F}$ becomes
\[
t^{(2+\alpha)/p_i} \le C_i t^{(2+\beta)/q_i},\quad t\ge 1 \quad \Longleftrightarrow \quad
\frac{2+\alpha}{p_i} \le \frac{2+\beta}{q_i}.
\]
The interpolated condition follows immediately:
\[
\frac{2+\alpha}{p_\theta} = (1-\theta)\frac{2+\alpha}{p_0} + \theta\frac{2+\alpha}{p_1}
\le (1-\theta)\frac{2+\beta}{q_0} + \theta\frac{2+\beta}{q_1}
= \frac{2+\beta}{q_\theta}.
\]
The algebraic proof reproduces this with $C_\theta = C_0^{1-\theta}C_1^{\theta}=1^{1-\theta}1^\theta=1$.
\end{example}

\medskip
Theorem~\ref{thm:inverse-convexity-F} has a direct and important application to the interpolation of embeddings between Bergman-Orlicz spaces. Recall from the work of Dje and Sehba \cite{DieSehba2021,DieSehba2023,Sehba} that for $\Phi, \Psi \in \U\cap \Lc$, $\alpha\ge -1$,and $\beta > -1$, under the additional hypothesis that the ratio $\Psi(t)/\Phi(t)$ is nondecreasing (which guarantees the continuous embedding of the underlying Orlicz spaces), the embedding
\[
A^{\Phi}_{\alpha}(\Omega) \hookrightarrow A^{\Psi}_{\beta}(\Omega)
\]
holds if and only if there exists a constant $C>0$ such that
\[
\Phi^{-1}(t^{2+\alpha}) \le C \, \Psi^{-1}(t^{2+\beta}) \qquad \forall t\ge 1.
\]
Thus the condition defining $\mathcal{F}$ together with the monotonicity of $\Psi/\Phi$ is exactly the embedding condition. 
We then obtain the following interpolation of Bergman-Orlicz spaces result.
\begin{theorem}\label{thm:interpolation}
Let $\alpha\ge -1$, and $\beta > -1$ be fixed. Suppose that for two pairs of growth functions $(\Phi_0,\Psi_0)\in \Lc\cup\U$ and $(\Phi_1,\Psi_1)\in\Lc\cup\U$ we have $a_{\Psi_i}\ge b_{\Phi_i}$, $i-0,1$.
Then for any $\theta \in (0,1)$, the interpolated growth functions defined by
\[
\Phi_\theta^{-1}(t) = \Phi_0^{-1}(t)^{1-\theta} \Phi_1^{-1}(t)^{\theta}, \quad
\Psi_\theta^{-1}(t) = \Psi_0^{-1}(t)^{1-\theta} \Psi_1^{-1}(t)^{\theta}
\]
satisfy
\begin{itemize}
\item The ratio $\Psi_\theta(t)/\Phi_\theta(t)$ is nondecreasing,
\item The embedding $A^{\Phi_\theta}_{\alpha}(\Omega) \hookrightarrow A^{\Psi_\theta}_{\beta}(\Omega)$ holds.
\end{itemize}
\end{theorem}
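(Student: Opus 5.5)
The plan is to assemble the two bullets from results already established and then invoke the Dje--Sehba characterization recalled just before the statement.

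\textbf{First bullet.} This follows from Proposition~\ref{prop:monotonicity-preservation}. The hypotheses $a_{\Psi_i}\ge b_{\Phi_i}$, together with Lemma~\ref{lem:inversemonocity}, give that each $\Psi_i/\Phi_i$ and each $\Phi_i^{-1}/\Psi_i^{-1}$ is nondecreasing. The ratio of interpolated inverses is the geometric mean of the endpoint ratios, so its monotonicity is inherited, and the proposition also asserts that $\Psi_\theta/\Phi_\theta$ is nondecreasing. That is the first bullet.

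I would nevertheless double check the step ``$R_\theta$ follows the same way''. The interpolation is defined on inverses, so $\Psi_\theta/\Phi_\theta$ is not a geometric mean of $\Psi_i/\Phi_i$. The cleanest route is via indices. Write $s=\Phi_\theta^{-1}(t)$; then $\frac{t(\Phi_\theta^{-1})'(t)}{\Phi_\theta^{-1}(t)}$ is the convex combination of the corresponding quantities for $\Phi_0^{-1}$ and $\Phi_1^{-1}$. Hence
\[
a_{\Phi_\theta^{-1}}\ge (1-\theta)a_{\Phi_0^{-1}}+\theta a_{\Phi_1^{-1}}, \qquad b_{\Psi_\theta^{-1}}\le (1-\theta)b_{\Psi_0^{-1}}+\theta b_{\Psi_1^{-1}}.
\]
Using $a_{\Phi^{-1}}=1/b_\Phi$ and $b_{\Psi^{-1}}=1/a_\Psi$, the hypothesis $a_{\Psi_i}\ge b_{\Phi_i}$ reads $a_{\Phi_i^{-1}}\ge b_{\Psi_i^{-1}}$. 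Averaging gives $a_{\Phi_\theta^{-1}}\ge b_{\Psi_\theta^{-1}}$, i.e.\ $a_{\Psi_\theta}\ge b_{\Phi_\theta}$. Lemma~\ref{lem:inversemonocity} then yields that $\Psi_\theta/\Phi_\theta$ is nondecreasing. The same index bounds, combined with Propositions~\ref{prop:interp} and \ref{prop:interp-lower} or the characterization of $\U$ and $\Lc$ by indices, should place $\Phi_\theta$ and $\Psi_\theta$ in $\Lc\cup\U$. This membership is needed to apply the embedding criterion.

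\textbf{Second bullet.} The hypothesis must be read as including the embeddings $A^{\Phi_i}_\alpha\hookrightarrow A^{\Psi_i}_\beta$, $i=0,1$; otherwise the conclusion cannot hold. The index condition alone only compares $\Phi_i$ and $\Psi_i$ and says nothing about $\alpha$ versus $\beta$. By the Dje--Sehba characterization, these embeddings give
\[
\Phi_i^{-1}(t^{2+\alpha})\le C_i\Psi_i^{-1}(t^{2+\beta}) \quad (t\ge1).
\]
Theorem~\ref{thm:inverse-convexity-F}, whose second proof works verbatim on $t\ge 1$, then gives
\[
\Phi_\theta^{-1}(t^{2+\alpha})\le C_0^{1-\theta}C_1^\theta\Psi_\theta^{-1}(t^{2+\beta}) \quad (t\ge1).
\]
With the first bullet and the membership in $\Lc\cup\U$, the characterization returns $A^{\Phi_\theta}_\alpha\hookrightarrow A^{\Psi_\theta}_\beta$.

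\textbf{Main obstacle.} The hard part is verifying that the interpolated functions stay in the admissible classes where the characterization applies, i.e.\ that they are $C^1$ with $\Phi_\theta\in\Lc\cup\U$, and that the monotonicity of $\Psi_\theta/\Phi_\theta$ really transfers. The index computation above is what I would rely on for both.
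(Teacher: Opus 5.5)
Your plan is the paper's own: Proposition~\ref{prop:monotonicity-preservation} for the first bullet, Theorem~\ref{thm:inverse-convexity-F} for the interpolated inverse inequality, then the Dje--Sehba characterization. Both of your deviations are corrections the paper needs.

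First, the statement does omit the hypothesis $(\Phi_i,\Psi_i)\in\mathcal{F}$, i.e.\ the endpoint embeddings. The paper's proof uses it tacitly when invoking Theorem~\ref{thm:inverse-convexity-F}, and without it the result is false: on the ball, $\Phi_i=\Psi_i=t^p$ satisfy $a_{\Psi_i}\ge b_{\Phi_i}$, yet $A^p_\alpha\hookrightarrow A^p_\beta$ fails for $\alpha>\beta$.

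Second, the paper's ``that $R$ is nondecreasing follows the same way'' is not a proof. Here $\Phi_\theta\neq\Phi_0^{1-\theta}\Phi_1^{\theta}$, and monotonicity of $\Phi_\theta^{-1}/\Psi_\theta^{-1}$ does not by itself give monotonicity of $\Psi_\theta/\Phi_\theta$. What does transfer is the uniform separation $a_{\Phi_i^{-1}}\ge b_{\Psi_i^{-1}}$, via the convex combination of logarithmic derivatives, exactly as you do. Lemma~\ref{lem:inversemonocity} then gives the first bullet.

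\textbf{The remaining gap.} What is still unproved in your proposal, and skipped entirely in the paper, is $\Phi_\theta,\Psi_\theta\in\Lc\cup\U$, which the characterization requires. Propositions~\ref{prop:interp} and~\ref{prop:interp-lower} do not help: they treat $\log\Phi_\theta=(1-\theta)\log\Phi_0+\theta\log\Phi_1$, not geometric means of inverses.

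When both endpoints lie in a common class, membership is easy and needs no indices. Indeed, $\Phi\in\U^q$ if and only if $\Phi^{-1}(t)/t$ is nonincreasing and $\Phi^{-1}(st)\ge c\,s^{1/q}\Phi^{-1}(t)$ for $s\ge1$. Both properties survive geometric means, so $\Phi_\theta\in\U^{q_\theta}$ with $1/q_\theta=(1-\theta)/q_0+\theta/q_1$; the argument for $\Lc$ is similar.

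For mixed endpoints it fails. Take $\Phi_0(t)=t^2\in\U$, $\theta=1/2$, and $\Phi_1\in\Lc$ with $t\Phi_1'/\Phi_1$ oscillating between $1/3$ and $1$. Then
\[
\frac{t(\Phi_\theta^{-1})'}{\Phi_\theta^{-1}}=\tfrac14+\tfrac12\,\frac{t(\Phi_1^{-1})'}{\Phi_1^{-1}}
\]
oscillates between $3/4$ and $7/4$. So $\Phi_\theta(t)/t$ is not monotone and $\Phi_\theta\notin\Lc\cup\U$. Every other hypothesis, endpoint embeddings included, can still be met: on the ball, take $\Psi_0=t^2$, $\Psi_1=t$ and $\beta$ large.

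So you must also assume that $\Phi_0,\Phi_1$ lie in the same class, and likewise $\Psi_0,\Psi_1$. With that added, your argument goes through.
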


\begin{proof}
The equivalence of the embedding to the inverse inequality is given in \cite{DieSehba2021,DieSehba2023,Sehba} under the monotonicity assumption. Theorem~\ref{thm:inverse-convexity-F} provides the inverse inequality for the interpolated pair. Proposition~\ref{prop:monotonicity-preservation} ensures that the required monotonicity of the ratio is preserved. Hence the interpolated pair satisfies all hypotheses of \cite[Corollary 2.6]{DieSehba2023}, and the embedding follows. The proof is complete.
\end{proof}

\section{Conclusion}
The two main results of this paper show that the convexity of $\mathcal{E}$ in the weight parameters $(\alpha,\beta)$ relies on the log-convexity/log-concavity properties of the \emph{inverses} of the growth functions, while the log-convexity of $\mathcal{F}$ in the growth functions $(\Phi_1,\Phi_2)$ is a direct consequence of the algebraic structure of the defining inequality.

\medskip
The log-convex interpolation of inverses used in Theorem~\ref{thm:inverse-convexity-F} is exactly the complex interpolation method applied to the scale of Orlicz spaces. If one defines the Orlicz space $L^{\Phi_\theta}$ via the Young function $\Phi_\theta$ constructed from $\Phi_0$ and $\Phi_1$ by this formula, then the complex interpolation space $[L^{\Phi_0}, L^{\Phi_1}]_\theta$ is equal to $L^{\Phi_\theta}$ (up to equivalence of norms), provided the growth functions satisfy appropriate conditions (\cite{BerghLofstrom1976,delCampo2014,GustavssonPeetre1977}). To then obtain $A^{\Phi_\theta}$ as complex interpolate of two Bergman-Orlicz spaces, one needs the growth functions to also satisfy  the $\nabla_2$-condition so that a Bergman projection is continuous on the corresponding Orlicz space  (see \cite{DieSehba2021,DieSehba2023} for the definitions). Thus Theorem~\ref{thm:interpolation} can also be obtained by applying complex interpolation to the embedding operator provided one restricts the class $\U\cap\nabla_2$ (by analogy with the power functions case \cite{ZhuZhao} and the requirement of the boundedness of the Bergman projection in \cite{DieSehba2023}). However, our direct algebraic approach avoids any functional analysis and works directly at the level of growth functions, not only making the proof elementary and self-contained, but allowing a wider class of growth functions. For example, our result gives an answer even for growth functions in $\Lc$ while complex interpolation is usually done only for example in case of power functions $\Phi(t)=t^p$, only for $1\le p<\infty$ (see \cite{ZhuZhao}).

\end{document}